\theoremstyle{plain}
\newtheorem{thm}{Theorem}[section]
\newtheorem{defn}{Definition}[section]
\newtheorem{rmk}{Remark}[section]
\newtheorem{assumption}{Assumption}[section]
\newtheorem{algorithm}{Algorithm}[section]
\newcommand{\beq}{\begin{equation}}
\newcommand{\eeq}{\end{equation}}
\newcommand{\beqa}{\begin{eqnarray}}
\newcommand{\eeqa}{\end{eqnarray}}
\newcommand{\beqas}{\begin{eqnarray*}}
\newcommand{\eeqas}{\end{eqnarray*}}
\renewcommand*{\@biblabel}[1]{\hfill#1.}
\begin{document}
\title{Generalized Inexact Proximal Algorithms: Habit's/ Routine's Formation
with Resistance to Change, following Worthwhile Changes}

\author{G. C. Bento\thanks{IME-Universidade Federal de Goi\'as,
Goi\^ania-GO 74001-970, Caixa Postal: 131, BR, \newline
Fone: +55 (62)3521-1208  ({\tt glaydstonc@gmail.com}) }
\and
A. Soubeyran \thanks{Aix-Marseille School of Economics, Aix-Marseille University, CNRS \& EHESS, FR ({\tt antoine.soubeyran@gmail.com})}
}
%\date{}

\maketitle
\vspace{.2cm}

%\noindent
%{\bf Abstract}
%
%
%\noindent
%{\bf Keywords.} Proximal\,$\cdot$\,Multicriteria optimization\,$\cdot$ Quasi-convexity\,$\cdot$Kurdyka Lojasiewicz inequality.
%
%\noindent{\bf AMS Classification.} 90C30\,$\cdot$\,26B25\,$\cdot$\,65K05\,$\cdot$\,53C21.
%
\begin{abstract}
This paper shows how, in a quasi metric space, an inexact proximal algorithm
with a generalized perturbation term appears to be a nice tool for
Behavioral Sciences (Psychology, Economics, Management, Game theory,\ldots
). More precisely, the new perturbation term represents an index of
resistance to change, defined as a \textquotedblleft curved enough" function
of the quasi distance between two successive iterates. Using this behavioral
point of view, the present paper shows how such a generalized inexact
proximal algorithm can modelize the formation of habits and routines in a
striking way. This idea comes from a recent \textquotedblleft variational
rationality approach" of human behavior which links a lot of different
theories of stability (habits, routines, equilibrium, traps,\ldots ) and
changes (creations, innovations, learning and destructions,\ldots ) in
Behavioral Sciences and a lot of concepts and algorithms in Variational
Analysis. In this variational context, the perturbation term represents a
specific instance of the very general concept of resistance to change, which
is the disutility of some inconvenients to change. Central to the analysis
are the original variational concepts of \textquotedblleft worthwhile
changes" and \textquotedblleft marginal worthwhile stays". At the behavioral
level, this paper advocates that proximal algorithms are well suited to
modelize the emergence of habituation/routinized human behaviors. We show
when, and at which speed, a \textquotedblleft worthwhile to change" process
converges to a behavioral trap.

\noindent\textbf{AMS Classification.} 49J52\,$\cdot$\,49M37\,$\cdot$\,65K10\,%
$\cdot$\,90C30\,$\cdot$\,91E10.

\noindent {\bf keywords}.\  {Nonconvex optimization\and Kurdyka-Lojasiewicz inequality\and
inexact proximal algorithms\and habits\and routines\and worthwhile changes}
\end{abstract}

\section{Introduction}

\label{sec:1}

The main message of this paper is that, using the behavioral context of a
recent \textquotedblleft Variational rationality" approach of worthwhile
stay and change dynamics proposed by Soubeyran~\cite{Soubeyran2009,
Soubeyran2010}, a generalized proximal algorithm can modelize fairly well an
habituation process as described in Psychology for an agent, or a
routinization process, in Management Sciences, for an organization.
This opens the door to a new vision of proximal algorithms. They are not
only very nice mathematical tools in optimization theory, with striking
computational aspects. They can also be nice tools to modelize the dynamics
of human behaviors.

Theories of stability and change consider successions of stays and changes.
Stays refer to habits, routines, equilibrium, traps, rules, conventions,\ldots. Changes represent creations, destructions, learning processes,
innovations, attitudes as well as beliefs formation and revision, self
regulation problems, including goal setting, goal striving and goal
revision, the formation and break of habits and routines,\ldots. In the
interdisciplinary context which characterizes all these theories in
Behavioral Sciences, the \textquotedblleft Variational rationality
approach" (see \cite{Soubeyran2009, Soubeyran2010}), shows how to modelize
the course of human activities as a succession of worthwhile temporary stays
and changes which balance, each step, motivation to change (the utility of
advantages to change) and resistance to change (the disutility of
inconvenients to change). This very simple idea has allowed to see proximal
algorithms as an important tool to modelize the human course of actions,
where the perturbation term of a proximal algorithm can be seen as a crude
formulation of the complex concept of resistance to change, while the
utility generated by a change in the objective function can represent a
crude formulation of the motivation to change concept. The\ Variational
rationality approach considers three original concepts: \ i)
\textquotedblleft worthwhile changes", when, each step, motivation to change
is higher enough with respect to resistance to change, ii) \textquotedblleft
non marginal worthwhile changes" and, \ iii) ``variational traps",
\textquotedblleft easy enough to reach", that the agent can reach using a
succession of worthwhile changes, and \textquotedblleft difficult enough to
leave", such that, being there, it is not worthwhile to move from there.

These three concepts represent the pilar of the Variational rationality
approach; see \cite{Soubeyran2009, Soubeyran2010}, which has provided an
extra motivation to develop further the study of proximal algorithms in a
nonconvex and possibly nonsmooth setting. Among other recent applications of
this simple idea, see Attouch and Soubeyran~\cite{AtAnt2011} for local
search proximal algorithms, Flores-Bazan et al. \cite{Antoine2012} for
worthwhile to change games, Attouch et al. \cite{ARS2007} for alternating
inertial games with costs to move, and Cruz Neto et al. \cite{Xavier2012}
for the \textquotedblleft how to play Nash" problem,\ldots . In all these
papers the perturbation term of the usual proximal point algorithm is a
linear or a quadratic function of the distance or quasi distance between two
successive iterates. They modelize the case of \textquotedblleft strong
enough resistance" to change. Our paper examines the opposite case of
\textquotedblleft weak enough" resistance to change where the pertubation
term modelizes the difficulty (relative resistance) to be able to change as
a \textquotedblleft curved enough" function of the quasi distance between
two successive iterates. A quasi distance modelizes costs to be able to
change as an index of dissimilarity between actions where the cost to be
able to change from an action to an other one is not the same as the cost to
be able to change in the other way. In a first paper, Bento and Soubeyran~\cite{BSPaper1}
 show when, in a quasi metric space, a generalized inexact proximal
algorithm, equipped with such a generalized perturbation term, defined, each
step, by a sufficient descent condition and a stopping rule, converges to a
critical point. Then, it is shown that the speed of convergence and
convergence in finite time depend of the curvature of the perturbation term
and of the Kurdyka-Lojasiewicz property associated to the objective
function. A striking and new application is given. It concerns the impact of
the famous ``loss aversion effect" (Nobel Prize Kahneman and Tversky~\cite{Tversky1979},
Tversky and Kahneman~\cite{Tversky1991}) on the speed of convergence of the generalized
inexact proximal algorithm.

In the present paper, inspired by the VR \textquotedblleft variational
rationality" approach, we consider a new inexact proximal algorithm whose
sufficient descent condition is, each step, a little more demanding, using,
each step, the same stopping rule. Applying the convergent result of the
first paper \cite{BSPaper1} this simple modification is a way to
force convergence even more. It gives an intuitive sufficient condition for
the critical point to be a variational trap (weak or strong). In this case
changes are required to be " worthwhile enough", the stopping rule is the
same, and the end of the convergent worthwhile stay and change process is
both a critical point and a variational trap. Doing so, this paper extends
the convergence result to a critical point of Attouch and Bolte~\cite{Attouch2009}, Attouch et al.~\cite{Attouch2010} and Moreno et al.~\cite{Moreno2011}, using a fairly general \textquotedblleft convex enough"
perturbation term. It is important to note that, as an application, it is
possible to consider the formation of habits and routines as an inexact
proximal algorithm in the context of weak resistance to change. However,
because of its strongly interdisciplinary aspect (Mathematics, Psychology,
Economics, Management), to be carefully justified, this application needs
several steps. Due to space constraints, these considerations are given in
Bento and Antoine~\cite{BentoAntoine2014-2}. Then, at the behavioral level, the main message of this paper is
to advocate that our generalized proximal algorithm is well suited to
modelize the formation of habitual/ routinized human behaviors. The list of the main (VR)
concepts is presented on an example in Section 2. To get more perspective,
in the Annex, the VR approach of stability and change dynamics is compared
in great detail with a complementary theory relative to the dynamics of
human behavior, the HD habitual domain theory (see Yu~\cite{Yu1990} ) and its recent
continuation, the DMCS approach (decision making with changeable spaces; see Larbani and Yu~\cite{LarbaniYu2012}). A last section applies the convergence and speed of
convergence result of our generalized proximal algorithm to the modelization
of the formation and break of habits and routines. It compares very
succinctly what HD and VR can say on this important topic for human behavior.

At a higher dimensional level where inexact proximal \ algorithms represent
a specific formulation of VR, the complementary VR and HD
approaches. consider, both stability and change dynamics, but modelize them in
a different way: deterministic worthwhile temporary stays and changes
dynamics for the VR approach and Markov chains for HD theory. Both focus the
attention on optimization and satisficing processes. VR is variable and
possibly intransitive preference (utility) based, while HD is charge based.
VR main topic is the self regulation problem (goal setting, goal striving,
goal revision and goal disengagement) at the individual level or for
interactive agents. The HD main topic is to know ``how agents expand and
enrich their habitual domain". Both approaches examine decision making
problems with changeable structures (spaces, parameters, goals, preferences
and charges). Each approach considers different aspects of what can be
changed and how it can change.

%We obtain a striking result for Behavioral Sciences, related to the famous
%\textquotedblleft loss aversion effect", showing that higher relative
%aversion to change (a higher resistance to change with respect to motivation
%to change) favors convergence.

Our paper is organized as follows. Section~\ref{sec2} gives an example which
helps to list the main variational tools necessary to define the central
concepts of \textquotedblleft worthwhile change" and "variational trap" for
behavioral applications. Section 3 shows how inexact proximal
algorithms can represent adaptive satisficing processes. Section~4
examines a generalized inexact proximal algorithm which converges to a
critical point \ which is also a variational trap (weak or strong), when the
objective function satisfies a Kurdyka-Lojasiewicz inequality. A last
section summarizes very briefly\ the VR variational rationality approach and
the HD habitual domain theory. The conclusion follows. In \cite{BentoAntoine2014-2} the authors
compare in greater details the VR and HD approaches relative to
habituation/routinization processes.

\section{Variational Rationality : How Successions of Worthwhile Stays and
Changes End in Variational Traps}\label{sec2}

\subsection{Worthwhile Stay and Change Dynamics}

A recent variational rationality approach, see \cite%
{Soubeyran2009, Soubeyran2010}, gives a common background to a lot of
theories of stability/stay and change in Behavioral Sciences (Psychology,
Economics, Management Sciences, Decision theory, Philosophy, Game theory,
Political Sciences, Artifical Intelligence$\ldots $), using as a central
building bloc the three concepts of \textquotedblleft worthwhile change",
\textquotedblleft marginal worthwhile change" and ``variational trap". All
these behavioral dynamics can be seen as a succession of worthwhile
temporary stays and changes $x^{k+1}\in W_{e_{k},\xi _{k+1}}(x^{k})$, $k\in \mathbb{N}$,
ending in variational traps $x^{\ast }\in X$, where $X$ is the universal space of actions (doing), having or being, depending of the applications.
$X$ includes all past elements and all the new elements that can be
discovered as time evolves.

The main idea is quite evident. If a behavioral theory wants to explain
\textquotedblleft why, where, how and when" agents perform actions and
change, this theory must define, each period, along a \quad path of changes $%
\left\{ x^{0},x^{1},...,x^{k},x^{k+1},....\right\} $ why the agent have,
first, an incentive to do some steps away from his current position and,
then, an incentive to stop changing one step more within this period.\ \ In
the current period $k+1,$ a change is such that $x^{k+1}\neq x^{k},$ while a
stay is $x^{k+1}=x^{k}$. Let $e_{k}\in E$ be the experience of the agent at
the end of the last period $k.$ A change $x^{k}$ $\curvearrowright x^{k+1}\in W_{e_{k},\xi _{k+1}}(x^{k})$
is worthwhile, when his ex ante motivation to change $M_{e_{k}}(x^{k},x^{k+1})$ is sufficiently higher (more than $\xi _{k+1}>0$)
\ than his ex ante resistance to change, $R_{e_{k}}(x^{k},x^{k+1})$. Then, $
x^{k+1}\in W_{e_{k},\xi _{k+1}}(x^{k})\Longleftrightarrow
M_{e_{k}}(x^{k},x^{k+1})\geq \xi _{k+1}R_{e_{k}}(x^{k},x^{k+1}).$ Motivation
and resistance to change are two complex variational concepts which admit a
lot of variants (see \cite{Soubeyran2009, Soubeyran2010}). Motivation to
change $M_{e_{k}}(x^{k},x^{k+1})=U_{e_{k}}\left[ A_{a_{k}}(x^{k},x^{k+1})%
\right] $ is the utility $U_{e_{k}}\left[ \cdot\right] $ of advantages to
change, $A_{e_{k}}(x^{k},x^{k+1}),$ while resistance to change $%
R_{e_{k}}(x^{k},x^{k+1})=D_{e_{k}}\left[ I_{e_{k}}(x^{k},x^{k+1})\right] $
is the disutility $D_{e_{k}}\left[ \cdot \right] $ of inconvenients to change $%
I_{e_{k}}(x^{k},x^{k+1}).$

\textbf{Worthwhile changes are generalized satisficing changes}: Within a
period, a worthwhile change $x^{k}\curvearrowright x^{k+1}\in W_{e_{k},\xi
_{k+1}}(x^{k})$ is desirable and feasible enough, i.e, acceptable, improving
with no too high costs to be able to improve. Then, a worthwhile change is a
generalized satisficing change where, each period, $\ $the agent chooses the
ratio $\xi _{k+1}>0,$ which \ represents how worthwhile a change must be to
accept to move rather than to stay. The famous Simon~\cite{simon1955} 
satisficing principle is a specific case (see \cite{Soubeyran2009,Soubeyran2010}).
Second, within the same period, the agent must also have to know when he
must stop changing. This is the case when one step more is not worthwhile.
More formally, this change is not \textquotedblleft marginally worthwhile",
when the ex ante marginal motivation to change is sufficiently lower than
the ex ante marginal resistance to change. In this case the agent does not
regret ex ante to do not go one step further. The motivation to change again
next period comes from residual unsatisfied needs or variable preferences.

A variational trap $x^{\ast }$ is such that, starting from an initial point $%
x^{0}\in X,$ it exists a path of worthwhile changes $x^{k+1}\in W_{e_{k},\xi
_{k+1}}(x^{k})$ which ends in $x^{\ast },$ i.e., such that, being there, it
is not worthwhile to move again, i.e., $W_{e_{\ast },\xi _{\ast }}(x^{\ast
})=\left\{ x^{\ast }\right\} $.

\subsection{Variational Concepts. An Example}

To save space and to fix ideas, let us define on a simple example all these
variational rationality concepts. This being done, we can easily show how an
inexact proximal algorithm represents a nice benchmark process of worthwhile
temporary stays and changes in term of, each \ period, \ a sufficient
descent condition and a stopping rule. For much more comments, and a more
complete formulation of each of these variational concepts, with references
to a \ lot of different disciplines in Behavioral Sciences which help to
justify their unifying power; see \cite{Soubeyran2009, Soubeyran2010}.

\begin{itemize}
\item [] \textbf{A simple model of knowledge management}: This example modelizes a
very simple case of knowledge management within an organization, to
determine a satisficing or, as a extreme case, the optimal size and shape of an innovative firm driven by a
leader. In Management Sciences, the literature on this topic is enormous and
represents one of its main area of research. Consider an entrepreneur
(leader) who, each period, can hire and fire different kinds and numbers of
skilled and specialized workers $\left\{ 1,2,...,j,\ldots,l\right\} =J$ (say
knowledge workers; see Long et al.~\cite{Long2014}) to produce a chosen
quantity of a final good of a chosen quality. The endogenous
quality $q(x)$ of this final good changes with the chosen profile of skilled
workers $x=(x^{1},x^{2},..,x^{j},..,x^{l})\geq 0$, where $x^{j}\geq 0$ is a
number of workers of type \ $j$. To save space and for simplification, each
period, each employed skilled worker utilizes one unit of a specific non
durable mean to produce, using his specific knows how to produce, one unit of a
specific component of type $j$. Then, the entrepreneur combines these
different components to produce $\mathfrak{q}(x)$ units of a final good of
endogenous quality $s(x)$. This production function is original because it
mixes both variable quantity and quality. The revenue of the entrepreneur
is $\varphi \left[ \mathfrak{q}(x),s(x)\right]$. His operational costs $%
\rho (x)$ are the sum of his costs to buy the non durable means used by
each worker, and the wages paid to each employed worker. Then, in a given
period, the profit of the entrepreneur who employs the profile $x\in X=R^{l}$
of skilled workers is $g(x)=\varphi \left[ \mathfrak{q}(x),s(x)\right] -\rho
(x)\in R.$ For a famous example of an endogenous production function of
quality, see Kremer~\cite{Kremer1993}.

\item [] \textbf{Advantages to change}: Let $x=x^{k}$ and $y=x^{k+1}$ be the last
period, and current period profiles of skilled workers chosen by the
entrepreneur. Then, if this is the case, his advantages to change his
profile of skilled workers from one period to the next is $%
A(x,y)=g(y)-g(x)\geq 0$;

\item [] \textbf{Inconvenients to change}: They represent the difference $%
I(x,y)=C(x,y)-C(x,x)\geq 0$ between costs $C(x,y)$ to be able to change from
profile $x$ to profile $y$ and costs $C(x,x)$ to be \ able to stay with the
same profile $x$ used in the last period;

\item [] \textbf{Costs to be able to change (to stay)}: To be able to hire one
skilled worker of type $j$, ready to work, costs \ $h_{+}^{j}>0$. These
costs include search and training costs. To fire one worker of type $j$,
costs $h_{-}^{j}>0$. These costs represent separation and compensation
costs. To keep \ a worker, ready to work, one period more, costs $%
h_{=}^{j}\geq 0$. These conservation costs include knowledge regeneration
and motivation costs. Then, in the current period, i) costs to conserve the
same profile of workers as in the last period are $C(x,x)=\Sigma
_{j=1}^{n}h_{=}^{j}x^{j}$ while , ii) costs to utilize the profile of
skilled workers $y$ are:
\[
C(x,y)=\Sigma _{j\in J_{+}(x,y)}\left[ h_{=}^{j}x^{j}+h_{+}^{j}(y^{j}-x^{j})%
\right] +\Sigma _{j\in J_{-}(x,y)}\left[
h_{=}^{j}y^{j}+h_{+}^{j}(x^{j}-y^{j})\right] ,
\]
where $J_{+}(x,y)=\left\{ j\in J,y^{j}\geq x^{j}\right\} $ and $%
J_{-}(x,y)=\left\{ j\in J,y^{j}<x^{j}\right\}$. For simplification, suppose that conservation costs are zero, i.e., $%
h_{=}^{j}=0$. Then, 
\[
C(x,x)=0\quad \mbox{and}\quad I(x,y)=\Sigma _{j\in
J_{+}(x,y)}h_{+}^{j}(y^{j}-x^{j})+\Sigma _{j\in
J_{-}(x,y)}h_{+}^{j}(x^{j}-y^{j}).
\]
So, $I(x,y)$ is a quasi distance $q(x,y):=I(x,y)\geq 0$ such that
\begin{itemize}
\item [i)] $q(x,y)=0$ iff $y=x$;
\item [ii)] $q(x,z)\leq q(x,y)+q(y,z)$, $x,y,z\in X$.
\end{itemize}
The more general case where $h_{=}^{j}>0 $ works as well.

\item [] \textbf{Motivation and resistance to change functions}: They are, moving
from the past profile of knowledge workers $x$ to the current profile $y$
are 
\[
M(x,y)=U\left[ A(x,y)\right] =\left[ g(y)-g(x)\right] ^{\mu }\quad \mbox{and}%
\quad R(x,y)=D\left[ I(x,y)\right] =q(x,y)^{\nu },\quad \mu ,\nu >0, 
\]%
where the utility and disutility functions are $U\left[ A\right] =A^{\mu }$
and $D\left[ I\right] =I^{\nu }$.

\item [] \textbf{Relative resistance to change function}: It is $\Gamma \left[
q(x,y)\right] =U^{-1}\left[ D\left[ I(x,y)\right] \right] =q(x,y)^{\nu /\mu
} $, where $\alpha =\nu /\mu >0.$

\item [] \textbf{Worthwhile changes}: In this setting, a change from
profile $x$ to profile $y$ is worthwhile if $M(x,y)\geq \xi R(x,y)$, i.e., $%
\left[ g(y)-g(x)\right]^{\mu }\geq \xi q(x,y)^{\nu }$, where $\xi >0$ is
the current and chosen \textquotedblleft worthwhile enough" satisficing
ratio. Then, a worthwhile change is such that 
\[
y\in W_{\xi }(x)\Longleftrightarrow g(y)-g(x)\geq \lambda \Gamma \left[
q(x,y)\right] ,\qquad \lambda =(\xi )^{1/\mu }>0. 
\]

\item [] \textbf{Succession of worthwhile temporary stays and changes}: In this
example they are
\[
g(x^{k+1})-g(x^{k})\geq \lambda _{k+1}\Gamma \left[ q(x^{k},x^{k+1})\right], \qquad k\in \mathbb{N}.
\]
\item [] \textbf{Variational traps: }In the example, given the initial profile of
skilled workers $x^{0}\in X,$ and a final worthwhile enough to change ratio $%
\lambda _{\ast }>0,$ $x^{\ast }\mathbf{\in }X$ \ is a variational trap if it
exists a path of worthwhile temporary stays and changes {\huge \ }$\left\{
x^{0},x^{1},...,x^{k},x^{k+1},....\right\} $ such that,
\begin{itemize}
\item [i)] $g(x^{k+1})-g(x^{k})\geq \lambda _{k+1}\Gamma \left[ q(x^{k},x^{k+1})%
\right] $, $k\in \mathbb{N}$;
\item [ii)] $g(y)-g(x^{\ast })<\lambda _{\ast }\Gamma \left[
q(x^{k},y)\right] ,$ $y\neq x^{*}, y\in X$.
\end{itemize}

\item [] \textbf{An habituation/routinization process:} It is such that, step by
step, gradually, the agent carries out a more and more similar action. This
is equivalent to say than the quasi distance $C(x^{k},x^{k+1})$ converges to
zero as $k$ goes to infinite.
\end{itemize}
When a worthwhile to change process converges to a variational trap, this
variational formulation offers a model of trap as the end point of a path of
worthwhile changes.

\section{Inexact Proximal Algorithms as Worthwhile Stays and Changes
Processes}

\subsection{Inexact Proximal Formulation of Worthwhile Changes}
\begin{itemize}
\item [] \textbf{Proximal intransitive preferences}. Let us define, in the current
period $k+1,$ the ``to be increased" entrepreneur proximal payoff to change
from $x=x^{k}$ to $y=x^{k+1}$ as $Q_{\lambda }(x,y)=g(y)-\lambda \Gamma %
\left[ q(x,y)\right] $ with $\lambda >0$. Then, the proximal payoff to stay
at $x=x^{k}=y=x^{k+1}$ is $Q_{\lambda }(x,x)=g(x)-\lambda \Gamma \left[
q(x,x)\right] =g(x).$ It follows that it is\ worthwhile\ to\ change from profile 
$x$ to profile $y$ iff $Q_{\lambda }(x,y)\geq Q_{\lambda }(x,x)$, i.e., $y\in
W_{\lambda }(x)$. This defines a variable and possibly non transitive
preference $z\geq _{x,\lambda }y\Longleftrightarrow Q_{\lambda }(x,z)\geq
Q_{\lambda }(x,y).$

To fit with the formulation of inexact proximal algorithms, where
mathematicians consider ``to be decreased" cost functions, let us consider
the residual profit that the entrepreneur expects to exhaust in the  future, 
$f(x)=\overline{g}-g(x)\geq 0$, where $\overline{g}$ $=\sup \left\{
g(y),y\in X\right\} <+\infty $ is the highest finite profit that the
entrepreneur can hope to get. Then, the ``to be decreased" proximal payoff \
of the entrepreneur is
\begin{equation}\label{eq:10000}
P_{\lambda }(x,y)=f(y)+\lambda \Gamma \left[ q(x,y)%
\right].
\end{equation} 
In this case, to move from profile $x$ to profile $y$ is a
worthwhile change $y\in W_{\lambda }(x)$ iff $P_{\lambda }(x,y)\leq
P_{\lambda }(x,x)$.

\item [] \textbf{Sufficient descent methods. }The entrepreneur performs, each period
\ $k+1,$ a sufficient descent, if he can choose a new profile $x^{k+1}$ such
that $f(x^{k})-f(x^{k+1})\geq {\lambda _{k+1}}\Gamma \lbrack
q(x^{k},x^{k+1})]$. This means that the entrepreneur follows a path of
worthwhile changes $x^{k+1}\in W_{\lambda _{k+1}}(x^{k})$, $k\in \mathbb{N}$. Since $q(x^{k},x^{k})=0$, this comes from definition of $W_{\lambda _{k+1}}(x^{k})$ combined with \eqref{eq:10000} for $x=x^{k}$, $y=x^{k+1}$ and $\lambda=\lambda_{k+1}$.
%
%$P_{\lambda _{k+1}}(x^{k},x^{k+1})=f(x^{k+1})+\lambda _{k+1}\Gamma \left[
%q(x^{k},x^{k+1})\right] \leq P_{\lambda
%_{k+1}}(x^{k},x^{k})=f(x^{k})+\lambda _{k+1}\Gamma \left[ q(x^{k},x^{k})%
%\right] =f(x^{k}),$ i.e., $f(x^{k})-f(x^{k+1})\geq {\lambda _{k+1}}\Gamma
%\lbrack q(x^{k},x^{k+1})], \quad k\in \mathbb{N}$.
In this case, each worthwhile change is not optimizing, contrary to each
step of an\ exact proximal algorithm.

\item [] \textbf{Exact proximal algorithms.} The entrepreneur follows an exact
proximal algorithm if, each current period \ $k+1$, he can choose a new
profile $x^{k+1}$ which minimizes his ``to be decreased" proximal payoff $%
P_{\lambda _{k+1}}(x^{k},y)=f(y)+\lambda _{k+1}\Gamma \lbrack q(x^{k},y)]$ on
the whole space $X$, \textbf{\ }%
\begin{equation}
x^{k+1}\in \mbox{argmin}_{y\in X}\left\{ f(y)+\lambda
_{k+1}\Gamma \lbrack q(x^{k},y)]\right\},\qquad k\in \mathbb{N},
\end{equation}
which allows us to obtain  $x^{k+1}\in W_{\lambda _{k+1}}(x^{k})$, $k\in \mathbb{N}$. In
Mathematics the formulation is
\begin{equation}\label{eq:prox1-2}
x^{k+1}\in \mbox{argmin}_{y\in X}\left\{ f(y)+\lambda
_{k}\Gamma \lbrack q(x^{k},y)]\right\}, \qquad k\in \mathbb{N}.
\end{equation}
It takes $\lambda _{k}$ instead of $\lambda _{k+1}$. In this case, the
entrepreneur follows a path of optimal worthwhile changes, $x^{k+1}\in
W_{\lambda _{k}}(x^{k})$, $k\in \mathbb{N}$. In this paper, we will adopt
the Mathematical formulation.

\item [] \textbf{Epsilon} \textbf{inexact proximal algorithms. } Several variants can
be founded in this important literature of what is inexact. Let us consider the version given
in Attouch and Soubeyran~\cite{AtAnt2011} following a long tradition, starting with
Rockafellar~\cite{R76}. In our context, the entrepreneur follows an inexact
proximal algorithm if, each period $k+1$, he can choose a new profile $%
x^{k+1}$\ such that
\[
f(x^{k+1})+\lambda _{k}\Gamma \lbrack
q(x^{k},x^{k+1})]\,\leq f(y)+\lambda _{k}\Gamma \lbrack
q(x^{k},y)]+\varepsilon _{k},\quad y\in X,
\]
given a sequence of nonnegative error terms $\left\{ \varepsilon _{k}\right\}$, i.e,
$
P_{\lambda _{k}}(x^{k},x^{k+1})\leq P_{\lambda _{k}}(x^{k},y)+\varepsilon_{k}$,\ $y\in X$. 
The term $\lambda _{k}$ can be replaced by $\lambda_{k+1}$.

\item [] \textbf{Epsilon inexact proximal algorithms represent a succession of
adaptive satisficing processes. \ }Let $\overline{Q}_{\lambda
_{k}}(x^{k})=\sup \left\{ Q_{\lambda _{k}}(x^{k},y),y\in X\right\} <+\infty $
and $\underline{P}_{\lambda _{k}}(x^{k})=\inf \left\{ P_{\lambda
_{k}}(x^{k},y),y\in X\right\} >-\infty $ be, for each current period $k+1$, the
optimal past values of the ``to be increased" and `to be decreased" proximal
payoffs of this entrepreneur. Let $\overline{Q}_{\lambda
_{k}}(x^{k})-s_{k+1} $ and $\underline{P}_{\lambda _{k}}(x^{k})+s_{k+1}$ be,
in this current period $k+1,$ the current \ satisficing levels of the ``to be
increased" and ``to be decreased" proximal payoffs of the entrepreneur. In
this current period$,$ $s_{k+1}>0$ represents, for the VR approach, a given
satisfacing rate; see \cite{Soubeyran2009,Soubeyran2010}. For an inexact proximal
algorithm, $s_{k+1}=\varepsilon _{k}>0$ is a given error term. Then, in the
context of the VR theory, an inexact proximal algorithm have a new
interpretation. It means that, for each period $k+1$, the new profile $x^{k+1}$
must be satisficing. That is to say, ``to be increased" and ``to be decreased"
proximal payoffs of the entrepreneur must be higher or lower than the current
satisficing level, i.e., $Q_{\lambda _{k}}(x^{k},x^{k+1})\geq \overline{Q}%
_{\lambda _{k}}(x^{k})-\varepsilon _{k}$ for a ``to be increased" \ proximal
payoff and $P_{\lambda _{k}}(x^{k},x^{k+1})\leq \underline{P}_{\lambda
_{k}}(x^{k})+\varepsilon _{k}$ for a ``to be decreased" proximal payoff. Each
period $k+1,$ let us consider the variable satisficing set \textbf{\ }$%
S_{\lambda _{k},\varepsilon _{k}}(x^{k})=\left\{ y\in X,P_{\lambda
_{k}}(x^{k},y)\leq \underline{P}_{\lambda _{k}}(x^{k})+\varepsilon
_{k}\right\}$. Then, an epsilon inexact proximal algorithm is defined by a
succession of repeated decision making problems with changeable spaces and
goals (satisficing levels): find $y\in $\textbf{\ }$S_{\lambda
_{k},\varepsilon _{k}}(x^{k})$, $k\in \mathbb{N}$. They are decision making  problems
with changeable spaces. See Larbani and Yu~\cite{LarbaniYu2012} for different aspects of what
can be change and how (their DMCS approach).
\end{itemize}
\subsection{Marginally Worthwhile Changes}\label{sec:3.2}

Consider the current period $k+1$. Let $\ x=x^{k}\curvearrowright y=x^{k+1}$
be a worthwhile change from $x^{k}$ to $x^{k+1}\in W_{\lambda _{k}}(x^{k})$
and let $x^{k+1}\curvearrowright $ $z\in \mathfrak{M}(x^{k+1})\subset X$ be
a marginal change, where $\mathfrak{M}(x^{k+1})$ is a small neighborhood of $%
x^{k+1}$ in the quasi-metric space $X.$ Then, at each period $k+1,$ the
agent who has done the worthwhile change $y=x^{k+1}\in W_{\lambda
_{k}}(x^{k})$ will stop to prolonge this change if, doing one step more this
period $k+1$, from $x^{k+1}$ to $z\in \mathfrak{M}(x^{k+1}),$ this marginal
change is not worthwhile, i.e., $z\notin W_{\lambda _{k}}(x^{k+1}).$ This is
a generalized stopping rule, a \textquotedblleft not worthwhile marginal
change" condition, that will be used later in the context of proximal
algorithms; see condition \eqref{eq:prox6}.

\subsection{Classification of Inexact Proximal Algorithms: The Separation
Between Weak and Strong Resistance to Change}
\begin{itemize}
\item [] \textbf{Two cases.} The consideration of relative resistance to change
functions $\Gamma[\cdot]$ helps to classify proximal algorithms in two separate
groups. The first case is of strong resistance to change, where $\Gamma[q]=q$ for all $q\geq 0$. This case have been examined in \cite{Soubeyran2009,Soubeyran2010,AtAnt2011}. The second case is of weak resistance to change,
where $\Gamma[q]=q^{2}$ and $q=q(x,y)$ is a distance and
not a quasi distance. This is the traditional case. The literature on this topic
is enormous; see, for example, Moreau \cite{Moreau1965} and Martinet~\cite%
{M70}, as well as in the study of variational inequalities associated to
maximal monotone operators; see Rockafellar~\cite{R76}.

The variational approach which considers relative resistance to change
as a core concept which balances motivation and resistance to change
provides us an extra motivation to develop further the study of proximal
algorithms in a nonconvex and possibly nonsmooth setting where the
perturbation term of the usual proximal point algorithm becomes a
\textquotedblleft curved enough" function of the quasi distance between two
successive iterates. Soubeyran~\cite{Soubeyran2009,Soubeyran2010} and, later, Bento and Soubeyran~\cite{BSPaper1}, in a first paper which paves the way for the present one, have shown
the strong link between a relative resistance to change index with the famous ``loss aversion" index (\cite{Tversky1979,Tversky1991}). The generalized proximal algorithm examined, both, in \cite{BSPaper1} and in the present paper, is new and more adapted for
applications in Behavioral Sciences. Moreover, it retrieves recent
approaches of the proximal method for nonconvex functions; see \cite%
{Attouch2009, Moreno2011}. %For other approaches of the proximal method for
%nonconvex functions see, for instance, Fukushima and Mine~\cite%
%{Fukushima1981}, Kaplan and Tichatschke~\cite{Kaplan1998}, Spingarn and
%Jonathan \cite{Spingarn1982}, Pennanen \cite{pennanen2002}, Iusem et al. 
%\cite{pennanen2003}, Combettes and Pennanen \cite{Combettes2004}, Garciga
%Otero and Iusem \cite{Iusem2007}.

\item [] \textbf{Hypothesis on the relative resistance to change. }In the remainder
of this paper we assume that $\Gamma $ is a
twice differentiable function such that: 
\begin{equation}
\Gamma \lbrack 0]=\Gamma ^{\prime }[0]=0,\quad \mbox{and}\quad \Gamma
^{\prime }[q]>0,\quad \Gamma ^{\prime \prime }[q]>0,\quad q>0,
\label{condDC1}
\end{equation}%
and there exist constants $r,\bar{q},\bar{\rho}_{\Gamma }(r)>0$, satisfying
the following condition: 
\begin{equation}
\Gamma ^{\prime }[q/r]\leq \bar{\rho}_{\Gamma }(r)\Gamma \lbrack q]/q,\qquad
0<q\leq \bar{q}.  \label{NewcondDC2}
\end{equation}%
Let us consider a generalized rate of curvature of $\Gamma $ given by: 
\begin{equation}
\rho _{\Gamma }(q,r):=\frac{\Gamma ^{\prime }[q/r]}{\left( \Gamma \lbrack
q]/q\right) },\qquad 0<q\leq \bar{q}.  \label{ratecurv100}
\end{equation}%
In the particular case $r=1$, \eqref{ratecurv100} represents, in Economics,
the elasticity of the disutility curve $\Gamma $; see, for instance, \cite%
{Soubeyran2009, Soubeyran2010}. From \eqref{ratecurv100}, condition %
\eqref{NewcondDC2} is equivalent to the condition: 
\begin{equation*}
\bar{\rho}_{\Gamma }(r)=\sup \{\rho _{\Gamma }(q,r):0<q<\bar{q}\}<+\infty
,\qquad r\in ]0,1[\quad \mbox{fixed}.
\end{equation*}%
Let us consider, for each $\alpha >1$ fixed, the function $\Gamma \lbrack
q]:=q^{\alpha }$. It is easy to see that, in this case, $\bar{\rho}%
_{D}(r)\in \lbrack \alpha r^{1-\alpha },+\infty )$. In particular, we can
take 
\begin{equation}
\bar{\rho}_{\Gamma }(q,r)=\alpha r^{1-\alpha }=\bar{\rho}_{\Gamma
}(r)<+\infty .  \label{curvature10}
\end{equation}%
More accurately, for each $\alpha >1$, $\Gamma \lbrack q]=q^{\alpha }$
represents a disutility of costs to change. It is strictly increasing and
satisfies \eqref{condDC1} and \eqref{NewcondDC2}.
\end{itemize}
\section{An Inexact Proximal Point Algorithm: Convergence to a Weak or
Strong Variational Trap}

\label{sec4}

\subsection{End Points as Critical Points or Variational Traps}

In a first paper, Bento and Soubeyran~\cite{BSPaper1} showed when, in a quasi metric
space, a generalized inexact proximal algorithm, equipped with a generalized
perturbation term $\Gamma \left[ q(x,y)\right] $, and defined, each step by,
i) a sufficient descent condition and, ii) a stopping rule, converges to a
critical point. Then, they have shown that the speed of convergence and
convergence in finite time depends of the curvature of the perturbation term
and of the Kurdyka-Lojasiewicz property associated to the objective
function. A striking and new application has been given. It concerns the
impact of the famous ``loss aversion effect" (Nobel Prize \cite{Tversky1979,Tversky1991}) on the speed of convergence of the
generalized inexact proximal algorithm. However, in the context of the \textquotedblleft Variational rationality
approach", which considers, as central dynamical concepts, worthwhile stay
and change processes, these important results in Applied Mathematics are not
enough, from the viewpoint of our applications to Behavioral Sciences,
unless we can show that this critical point is a variational trap (strong or
weak) where the agent will prefer to stay than to move, because his
motivation to change is strictly or weakly lower than his resistance to
change. This section presents, under the conditions of \cite[Theorem 3.1]%
{BSPaper1}, a worthwhile stay and change process which converges to a
critical point of $f$ which is a weak trap (compare, below, with the
definition of a strong global trap). Then, start with the general definition
of a weak global trap instead of a strong one (\cite{Soubeyran2009,Soubeyran2010}).

\begin{defn}
\label{def:trap} Let $x\in X$ be a given action and $\xi >0$ be a
satisficing rate of change choosen by the agent. Let $W_{\xi }(x):=\left\{
y\in X,M(x,y)\geq \xi R(x,y)\right\} $ be his worthwhile to change set,
starting from $x\in X$. Then, starting from $x^{\ast }\in X$ with a given
satisficing worthwhile to change rate $\xi ^{\ast }>0,$ a strong variational
trap $x^{\ast }\in X$ is such that motivation to change is stricly lower
than resistance to change, $M(x^{\ast },y)<\xi _{\ast }R(x^{\ast },y)$ for
all $y\neq x^{\ast }\in X.$ A weak variational trap is such that $M(x^{\ast
},y)\leq \xi _{\ast }R(x^{\ast },y)$, for all $y\in X.$ This defines the stationary side of a trap. The variational aspect comes from being the end of a worthwhile to change process, starting from an initial given point.
\end{defn}

\begin{rmk}$\;$ \label{remark:trap2}
\begin{itemize}
\item [a)] Notice that a strong global trap is such that $W_{\xi _{\ast}}(x^{\ast })=\left\{ x^{\ast }\right\} $ and a weak global trap is such
that $W_{\xi _{\ast }}(x^{\ast })=\left\{ y\in X,\text{ }M(x^{\ast },y)=\xi_{\ast }R(x^{\ast },y)\right\} $. At a strong (weak) global trap, the agent
strictly (weakly) prefers to stay than to move. Then, when a process of
worthwhile stays and changes converges to a strong variational trap, this
variational formulation defines, starting from an initial point, a
variational trap as the end point of a path of worthwhile changes,
worthwhile to approach, but not worthwhile to leave. This because, starting
from there, there is no way to do any other worthwhile change, except
repetitions.

\item [b)] Assuming that $\{\lambda _{k}\}$ converges to $\lambda _{\infty }$, our
sufficient condition proposes an algorithm which, following a succession of
worthwhile changes $x^{k+1}\in W_{\lambda _{k}}(x^{k}),k\in \mathbb{N}$,
converges to a weak global trap $x^{\ast }$ such that $W_{\lambda _{\infty
}}(x^{\ast })=\left\{ y\in X:\;M(x^{\ast },y)=\lambda _{\infty }R(x^{\ast
},y)\right\} $. Since the agent is free to choose all his satisficing
worthwhile to \ change rates $\lambda _{k}$ in an adaptive way, this will
show that the agent, choosing at the limit point $x^{\ast }$ a satisicing
worthwhile to change rate $\lambda _{\ast }>\lambda _{\infty },$ ends in a
strong global trap $x^{\ast }$, because $M(x^{\ast },y)=\lambda _{\infty
}R(x^{\ast },y)<\lambda _{\ast }R(x^{\ast },y)$, for all $y\in X$.

\item [c)] As observed in Section~\ref{sec2}, in the specific context of this
paper, we have 
\begin{equation*}
M(x,y)=U\left[ A(x,y)\right] =f(x)-f(y)
\end{equation*}%
\begin{equation*}
R(x,y)=D\left[ C(x,y)\right] ,\quad \Gamma \left[ q(x,y)\right] =\lambda
U^{-1}\left[ D[q(x,y)]\right] ,\xi =1.
\end{equation*}
\end{itemize}
Then, in our present paper, a strong (resp. weak) variational trap is
such that $f(x^{\ast })-f(y)<\lambda\Gamma \left[ q(x^{\ast },y)\right]$, for all $y\neq x^{\ast }$ (resp. $f(x^{\ast })-f(y)\leq \lambda \Gamma \left[ q(x^{\ast },y)\right]$, for all $y\in X$).

\end{rmk}

\subsection{Some Definitions from Subdifferential Calculus}

In this section some elements concerning the subdifferential calculus are
recalled; see, for instance, \cite{Rockafellar1998, Mordukhovich2006}. Assume that $f:\mathbb{R}^{n}\to\mathbb{R}\cup\{+\infty\}$ is a
proper lower semicontinuous function. The domain of $f$, which we denote by %
\mbox{dom}$f$, is the subset of $\mathbb{R}^{n}$ on which $f$ is
finite-valued. Since $f$ is proper, then \mbox{dom}$f\neq \emptyset$.

\begin{defn}
\label{defi:subFL}$\;$

\begin{itemize}
\item[i)] The Fr\'{e}chet subdifferential of $f$ at $x\in \mathbb{R}^{n}$,
denoted by $\hat{\partial}f(x)$, is the set given by: 
\begin{equation*}
\hat{\partial}f(x):=\left\{%
\begin{array}{c}
\{x^{\ast}\in\mathbb{R}^n: \displaystyle\liminf_{y\to x; y\neq x}\frac{1}{%
\|x-y\|}(f(y)-f(x)-\langle x^{\ast},y-x \rangle)\geq 0\}, \; if\; x\in%
\mbox{dom} f, \\ 
\emptyset,\; \hspace{7.9cm} if \; x\notin\mbox{dom} f.%
\end{array}%
\right.
\end{equation*}

\item[ii)] The limiting Fr\'{e}chet subdifferential (or simply
subdifferential) of $f$ at $x\in\mathbb{R}^n$, denoted by $\partial f(x)$,
is the set given by: 
\begin{equation*}
\partial f(x):=\left\{ 
\begin{array}{c}
\{x^{\ast}\in\mathbb{R}^n|\exists x_{n}\rightarrow x, f(x_n)\rightarrow
f(x),x_{n}^{\ast}\in\hat{\partial}f(x_n);x_{n}^{\ast}\rightarrow
x^{\ast}\},\; if \; x\in \mbox{dom}f. \\ 
\emptyset,\; \hspace{7.6cm} if\; x\notin\mbox{dom} f.%
\end{array}%
\right.
\end{equation*}
\end{itemize}
\end{defn}

Throughout the paper we consider the subdifferential $\partial f$ since it
satisfies a closedness property important in our convergence analysis, as
well as in any limiting processes used in an algorithmic context.

A necessary condition for a given point $x\in \mathbb{R}^{n}$ to be a
minimizer of $f$ is 
\begin{equation}  \label{inclusionCritc}
0\in \partial f(x).
\end{equation}
It is known that, unless $f$ is convex, \eqref{inclusionCritc} is not a
sufficient condition. The domain of $\partial f$, which we denote by %
\mbox{dom} $\partial f$, is the subset of $\mathbb{R}^{n}$ on which $%
\partial f$ is a nonempty set. In the remainder, a point that satisfies %
\eqref{inclusionCritc} is called limiting-critical or simply critical point.

%%%%%%%%%%%%%%%%%%%%%%%%%Part of Section 3%%%%%%%%%%%%%%%%%%

\subsection{The Algorithm}

%\paragraph{When an epsilon inexact proximal algorithm, if it converges,
%converges to a \ weak variational trap}

In \cite{AtAnt2011} the authors examined the ``local epsilon inexact
proximal" algorithm, 
\[
f(x^{k+1})+\lambda _{k}d(x^{k},x^{k+1})\,\leq
f(y)+\lambda _{k}d(x^{k},y)+\varepsilon _{k},\qquad y\in E(x^{k},r_{k})\subset X,
\]
where, i) $d$ is a distance, ii) $E(x^{k},r_{k+1})\subset X$ \
is a variable choice set (a moving ball), for each current period $k+1$. Following \cite{AtAnt2011} we consider the so called
\textquotedblleft global epsilon inexact proximal" algorithm as follows: starting from the
current position $x^{k}$, let us define the next iterate $x^{k+1}$ as
follows:
\begin{equation}\label{newinexctprox2}
f(x^{k+1})+\lambda _{k}\Gamma \lbrack q(x^{k},x^{k+1})]\,\leq f(y)+\lambda
_{k}\Gamma \lbrack q(x^{k},y)]+\varepsilon _{k},\qquad y\in X,
\end{equation}
where $\left\{ \lambda _{k}\right\} ,\left\{ \varepsilon _{k}\right\} $ are
given sequences of nonnegative real numbers, and $q$ is a quasi distance.
In the particular case where the generalized perturbation term $\Gamma \lbrack q(x,y)]=q(x,y)^{2}$ and 
$q(x,y)=d(x,y)$ is a distance, instead of a quasi distance, our ``global epsilon
inexact proximal" algorithm coincides with the case considered by Zaslavski~\cite{Zaslavski2011}.

%Next, we show that the limit point of any convergent sequence generated from
%the iterative process \eqref{newinexctprox2} is a weak global trap. 
%The case
%of a weak local \ trap where $E(x^{k},r_{k})=X$ for all $x^{k}\in X$ will
%follow very easily.

\begin{assumption}\label{Assumption1} There exist $\beta _{1},\beta _{2}\in \mathbb{R}%
_{++}$ such that: $\beta _{1}\Vert x-y\Vert \leq q(x,y)\leq \beta _{2}\Vert
x-y\Vert $, $x,y\in \mathbb{R}^{n}$.
\end{assumption}

\noindent This is the case in our knowledge management example. For an
other explicit example where inconvenients to change are a quasi-distance
satisfying Assumption~\ref{Assumption1}, see \cite{Moreno2011}.
Next, we recall the inexact version of the proximal point method introduce
in \cite{BSPaper1}.

\begin{algorithm}
\label{algor1} Take $x^{0}\in \mbox{dom}f$, $0<\bar{\lambda}\leq \tilde{%
\lambda}<+\infty $, $\sigma \in \lbrack 0,1[$ and $b>0$. For each $%
k=0,1,\ldots $, choose $\lambda _{k}\in \lbrack \bar{\lambda},\tilde{\lambda}%
]$ and find $(x^{k+1},w^{k+1},v^{k+1})\in \mathbb{R}^{n}\times \mathbb{R}%
^{n}\times \mathbb{R}^{n}$ such that: 
\begin{equation}
f(x^{k})-f(x^{k+1})\geq {\lambda _{k}}(1-\sigma )\Gamma \lbrack
q(x^{k},x^{k+1})],  \label{eq:prox4}
\end{equation}%
\begin{equation}
w^{k+1}\in \partial f(x^{k+1}),\quad v^{k+1}\in \partial q(x^{k},\cdot
)(x^{k+1}),
\end{equation}%
\begin{equation}
\Vert w^{k+1}\Vert \leq b\Gamma ^{\prime }[q(x^{k},x^{k+1})]\Vert
v^{k+1}\Vert ,  \label{eq:prox6}
\end{equation}

The first condition is a sufficient descent condition. It is \ a
(proximal-like) worthwhile to change condition $x^{k+1}\in W_{\xi
_{k+1}}(x_{k})$, where the proximal perturbation term \ defines the
relative resistance to change function. This condition tells
us that it is worthwhile to change  from $x^{k}$ to $x^{k+1}$, rather than to stay at $x^{k}$. In this case,
advantages to change from $x^{k}$ to $x^{k+1},$ $A(x^{k},x^{k+1})=f(x^{k})-f(x^{k+1})$ are, each period, higher than
some adaptive proportion $\xi _{k+1}=\lambda _{k}(1-\sigma )$ of
the relative disutility of inconvenients to change rather than to stay $\Gamma \lbrack q(x^{k},x^{k+1})]=U^{-1}\left[ D\left[ I(x^{k},x^{k+1})\right]\right]$, where, i) inconvenients to change \ rather than to stay are $I(x^{k},x^{k+1})=C(x^{k},x^{k+1})-C(x^{k},x^{k})=q(x^{k},x^{k+1})$, ii) costs to be able to change from $x^{k}$ to $x^{k+1}$
are $C(x^{k},x^{k+1})=q(x^{k},x^{k+1}),$ while, iii) costs to be able to stay $C(x^{k},x^{k})=q(x^{k},x^{k})=0$ are
zero as quasi distances. The second conditions defines subgradients of the
objective and costs to be able to change functions. The third condition \ is
a stopping rule which says, each period, when the agent prefers \ to do not
make a \ new marginal change, because it is not worthwhile to do it, this
period; see Section \ref{sec:3.2} on marginally worthwhile changes.
\end{algorithm}

\begin{rmk}
As pointed out by the authors, Algorithm~\ref{algor1} retrieves the inexact
algorithm proposed in \cite[Algorithm 2]{Attouch2010} in the particular case 
$\Gamma \lbrack q]=q^{2}/2$, $q(x,y)=\Vert x-y\Vert $ and $1-\sigma =\theta $%
. Moreover, Algorithm~\ref{algor1} is an habituation/routinization process
and any sequence generated from it is a path of worthwhile changes with
parameter $\lambda _{k}(1-\sigma )$ such that, at each step, it is
marginally worthwhile to stop. The variational stopping rule condition
raises the following question: when, marginally, a change stops to be
worthwhile? This strongly depends on the shapes of the utility and
desutility functions.
\end{rmk}

%%%%%%%%%%%%%%%%%%%%%%PART OF THE SECTION 3%%%%%%%%%%%%%%%%%%%%%%%%%%%%%%%%%%

Comparing Algorithm~\ref{algor1} with the iterative process %
\eqref{newinexctprox2}, we observe the following:

\begin{itemize}
\item[i)] on one side, the iterative process \eqref{newinexctprox2} is much
more specific than our Algorithm~\ref{algor1}. Indeed the weak ``worthwhile
to change" condition \eqref{eq:prox4} is replaced by the much stronger
condition \eqref{newinexctprox2}.

%$f(x^{k+1})+\lambda _{k}\Gamma \left[ q(x^{k},x^{k+1})\right] \leq
%f(y)+\lambda _{k}\Gamma \left[ q(x^{k},y)\right] +\varepsilon ^{k},$ for all 
%$y\in X$, all these conditions being imposed by the definition of an epsilon
%inexact proximal algorithm.

\item[ii)] on the other side, the iterative process \eqref{newinexctprox2}
does not impose the \textquotedblleft not worthwhile marginal change
condition" \eqref{eq:prox6} as the Algorithm~\ref{algor1} does.
\end{itemize}

Next we propose a new inexact proximal algorithm, combining a particular instance
of \eqref{newinexctprox2} with the stopping rule \eqref{eq:prox6}.

%\begin{algorithm}
%\begin{equation}
%f(x^{k})-f(x^{k+1})\geq {\lambda _{k}}\Gamma \lbrack q(x^{k},x^{k+1})]-\Vert
%\varepsilon ^{k}\Vert ,
%\end{equation}%
%\begin{equation}
%\Vert w^{k+1}\Vert \leq b\lambda _{k}\Gamma ^{\prime
%}[q(x^{k},x^{k+1})]\Vert v^{k+1}\Vert ,
%\end{equation}%
%\begin{equation}
%\Vert \varepsilon ^{k}\Vert \leq \sigma \mbox{min}\left\{ b\lambda
%_{k}\Gamma ^{\prime }\left[ q(x^{k},x^{k+1})\right] \Vert v^{k+1}\Vert
%,\lambda _{k}\Gamma \lbrack q(x^{k+1},x^{k})]\right\} ,  \label{eq:prox7}
%\end{equation}%
%where $q:\mathbb{R}^{n}\times \mathbb{R}^{n}\rightarrow \mathbb{R}_{+}$
%represent a quasi-distance and $\Gamma :\mathbb{R}_{+}\rightarrow \mathbb{R}%
%_{+}$ is a relative resistance to change function.
%\end{algorithm}

\begin{algorithm}
\label{algor2} Take $x^{0}\in \mbox{dom}f$, $0<\bar{\lambda}\leq \tilde{%
\lambda}<+\infty $, $\sigma \in \lbrack 0,1[$ and $b>0$. For each $%
k=0,1,\ldots $, choose $\lambda _{k}\in \lbrack \bar{\lambda},\tilde{\lambda}%
]$ and find $(x^{k+1},w^{k+1},v^{k+1})\in \mathbb{R}^{n}\times \mathbb{R}%
^{n}\times \mathbb{R}^{n}$ such that: 
\begin{equation}
f(y)-f(x^{k+1})\geq {\lambda _{k}}\left[(1-\sigma )\Gamma \lbrack
q(x^{k},x^{k+1})]-\Gamma \lbrack q(x^{k},y)]\right],\qquad y\in X,
\label{eq:prox1000}
\end{equation}%
\begin{equation}
w^{k+1}\in \partial f(x^{k+1}),\quad v^{k+1}\in \partial q(x^{k},\cdot
)(x^{k+1}),
\end{equation}%
\begin{equation}
\Vert w^{k+1}\Vert \leq b\Gamma ^{\prime }[q(x^{k},x^{k+1})]\Vert
v^{k+1}\Vert .  \label{eq:prox1001}
\end{equation}
\end{algorithm}

\begin{rmk}
This new inexact proximal algorithm imposes a stronger worthwhile to
change condition than Algorithm~\ref{algor1}, because it must be
verified, each period, for each $y\in X$. Setting $y=x^{k}$ gives the last worthwhile to change condition. The other two
conditions remain unchanged. Note that the exact proximal algorithm %
\eqref{eq:prox1-2} is a specific case of our new algorithm (it holds by
taking $\sigma =0$).
%{\huge PROBLEM }
%
%The first condition $f(x^{k})-f(x^{k+1})\geq {\lambda _{k}}(1-\sigma )\Gamma
%\lbrack q(x^{k},x^{k+1})]-\Gamma \lbrack q(x^{k},y)],\qquad y\in X,$ \textbf{%
%is WRONG}
%
%The reason is the following. Start form the Attouch, Soubeyran inexact
%proximal algorithm
%
%$f(x^{k+1})+\lambda _{k}\Gamma \lbrack q(x^{k},x^{k+1})]\,\leq f(y)+\lambda
%_{k}\Gamma \lbrack q(x^{k},y)]+\varepsilon _{k},\qquad y\in X,$
%
%and set $\varepsilon _{k}=\lambda _{k}\sigma \Gamma \left[ {q(x^{k},x^{k+1})}%
%\right] $. Then, $f(x^{k+1})+\lambda _{k}\Gamma \lbrack
%q(x^{k},x^{k+1})]\,\leq f(y)+\lambda _{k}\Gamma \lbrack q(x^{k},y)]+\lambda
%_{k}\sigma \Gamma \left[ {q(x^{k},x^{k+1})}\right] $ for all $y\in X$ gives
%
%$f(y)-f(x^{k+1})\geq {\lambda _{k}}(1-\sigma )\Gamma \lbrack
%q(x^{k},x^{k+1})]-\lambda _{k}\Gamma \lbrack q(x^{k},y)]$ for all $y\in X$%
%\textbf{\ (GOOD CONDITION)}
%
%BUT, writting this " \textbf{GOOD CONDITION} " as follows has no clear
%meaning in term of worthwhile to change condition. The condition which is
%clear, as a worthwhile to change condition, is the first one $%
%f(x^{k+1})+\lambda _{k}\Gamma \lbrack q(x^{k},x^{k+1})]\,\leq f(y)+\lambda
%_{k}\Gamma \lbrack q(x^{k},y)]+\lambda _{k}\sigma \Gamma {q(x^{k},x^{k+1})}$
%for all $y\in X$ and this is this one we must give !.Furthermore, it implies
%that, for $y=x^{k}$ we have $f(x^{k})-f(x^{k+1})\geq {\lambda _{k}}(1-\sigma
%)\Gamma \lbrack q(x^{k},x^{k+1})]$, which is the old algorithm.
The new inexact worthwhile to change condition is $\ P_{\lambda
_{k}}(x^{k},x^{k+1})\leq P_{\lambda _{k}}(x^{k},y)+\lambda _{k}\sigma \Gamma %
\left[ {q(x^{k},x^{k+1})}\right]$, for all $y\in X$.
\end{rmk}

As in \cite{Attouch2009,Moreno2011,Attouch2010,BSPaper1}, our main
convergence result is restricted to functions that satisfy the so-called
Kurdyka-Lojasiewicz inequality; see, for instance, \cite{Lojasiewicz1963,Kurdyka1998,Bolte2006,Attouch2010-2}.
%This was first introduced by Lojasiewicz 
%\cite{Lojasiewicz1963}, to real analytic functions, and extended by Kurdyka 
%\cite{Kurdyka1998} to differentiable definable functions in an o-minimal
%structure (for a detailed discussion on o-minimal structures see, for
%example, Dries and Miller~\cite{Dries1996}). For extensions of the
%Kurdyka-Lojasiewicz inequality, in the Euclidean context, to the classe of
%nonsmooth functions see Bolte et al. \cite{Bolte2006}, Bolte et al. \cite%
%{Bolte2007} and Attouch et al. \cite{Attouch2010-2}. 
Next formal definition
of the Kurdyka-Lojasiewicz inequality can be finding in \cite{Attouch2010-2}, where it is also possible to find several examples and a good discussion
over important classes of functions which satisfy the mentioned inequality.

\begin{defn}
A proper lower semicontinuous function $f:\mathbb{R}^{n}\to \mathbb{R} \cup
\{+\infty\}$ is said to have the Kurdyka-Lojasiewicz property at $\bar{x}\in%
\mbox{dom}\; \partial f$ if there exists $\eta \in ]0,+\infty]$, a
neighborhood $U$ of $\bar{x}$ and a continuous concave function $%
\varphi:[0,\eta[\rightarrow \mathbb{R}_+$ such that: 
\begin{equation}  \label{eq:kur101}
\varphi(0)=0,\quad \varphi\in C^1(0,\eta), \quad \varphi ^{\prime
}(s)>0,\quad s\in]0,\eta[;
\end{equation}
\begin{equation}  \label{eq:kur100}
\varphi ^{\prime }(f(x)-f(\bar{x}))dist(0, \partial f(x))\geq 1,\quad x\in
U\cap [f(\bar{x})<f<f(\bar{x})+\eta],
\end{equation}

\begin{itemize}
\item \textit{$dist(0,\partial f(x)):=inf\{\| v \|: v \in \partial f(x)\}$, }

\item \textit{$[\eta_1 <f<\eta_2]:=\{x\in M: \eta_1 < f(x) < \eta_2\},\quad
\eta_1<\eta_2$. }
\end{itemize}
\end{defn}

In what follows, we assume that  $f$ a is bounded from below, continuous on \mbox{dom}$f$ and KL function, i.e., a
function which satisfies the Kurdyka-Lojasiewicz inequality at each point of $\mbox{dom}\partial f$.

\begin{thm}
Assume that $\{x^{k}\}$ is bounded sequence generated from Algorithm~\ref{algor2}, $\tilde{x}$ is an accumulation point of $\{x^{k}\}$ and Assumption~\ref{Assumption1} holds. Let $U\subset\mathbb{R}^{n}$ be a neighborhood of $%
\tilde{x}$, $\eta\in ]0,+\infty]$ and $\varphi:[0,\eta[\to\mathbb{R}_{+}$ a
continuous concave function such that \eqref{eq:kur101} and \eqref{eq:kur100}
hold. If $\delta\in (0,\bar{q})$ $($see condition \eqref{NewcondDC2}$)$ and $%
r\in ]0,1[$ are fixed constants, $B(\tilde{x}, \delta/\beta_{1})\subset U$, $%
a:=\bar{\lambda}(1-\sigma)$ and $M:=\frac{Lb}{a}$, then  the whole sequence $\{x^{k}\}$ converges to a critical point $x^{\ast}$ of $f$
which is a strong global trap, relative to the worthwile to
change set $W_{\lambda _{\ast }}(x^{\ast })$, for any choice of the final
satisficing rate $\lambda _{\ast }>\lambda _{\infty }$.
\end{thm}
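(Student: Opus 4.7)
The plan is to split the proof into two parts: (i) show that the whole sequence $\{x^k\}$ converges to a critical point $x^{\ast}$ by invoking the convergence theorem of \cite{BSPaper1}, then (ii) upgrade this critical point to a strong variational trap by exploiting the \emph{stronger} descent inequality \eqref{eq:prox1000} available in Algorithm~\ref{algor2}, which is genuinely new compared with Algorithm~\ref{algor1}.

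For part (i), I would first observe that Algorithm~\ref{algor2} refines Algorithm~\ref{algor1}: setting $y=x^{k}$ in \eqref{eq:prox1000} and using $q(x^{k},x^{k})=0$ together with $\Gamma[0]=0$ recovers exactly the sufficient descent condition \eqref{eq:prox4}, while the subgradient condition and the stopping rule \eqref{eq:prox1001} coincide verbatim with \eqref{eq:prox6}. Hence every sequence generated by Algorithm~\ref{algor2} is a sequence produced by Algorithm~\ref{algor1}, so under the Kurdyka-Lojasiewicz property at the accumulation point $\tilde{x}$, Assumption~\ref{Assumption1}, and the hypotheses on $\Gamma$, Theorem~3.1 of \cite{BSPaper1} applies and gives that the whole sequence converges to a limit $x^{\ast}=\tilde{x}$ which is a critical point of $f$. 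From the same descent inequality, summability of $\Gamma[q(x^{k},x^{k+1})]$ follows (since $f$ is bounded below), so $q(x^{k},x^{k+1})\to 0$ and hence, by Assumption~\ref{Assumption1}, $\|x^{k+1}-x^{k}\|\to 0$.

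For part (ii), the key step is to let $k\to\infty$ in the strong worthwhile-to-change inequality \eqref{eq:prox1000}, rewritten as
\begin{equation*}
f(x^{k+1})-f(y)\;\leq\;\lambda_{k}\Gamma[q(x^{k},y)]-\lambda_{k}(1-\sigma)\Gamma[q(x^{k},x^{k+1})],\qquad y\in X.
\end{equation*}
Fix an arbitrary $y\in X$. From Assumption~\ref{Assumption1} together with the triangle inequality for the quasi-distance $q$, one gets $|q(x^{k},y)-q(x^{\ast},y)|\leq \beta_{2}\|x^{k}-x^{\ast}\|\to 0$, so by continuity of $\Gamma$ the term $\Gamma[q(x^{k},y)]$ tends to $\Gamma[q(x^{\ast},y)]$; similarly $\Gamma[q(x^{k},x^{k+1})]\to \Gamma[0]=0$. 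Using continuity of $f$ on $\mbox{dom}f$ and the assumed convergence $\lambda_{k}\to\lambda_{\infty}$, the inequality passes to the limit to yield
\begin{equation*}
f(x^{\ast})-f(y)\;\leq\;\lambda_{\infty}\,\Gamma[q(x^{\ast},y)],\qquad y\in X,
\end{equation*}
which is exactly the weak global trap property of Definition~\ref{def:trap} with parameter $\lambda_{\infty}$.

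To conclude the strong trap statement, fix any $\lambda_{\ast}>\lambda_{\infty}$ and any $y\neq x^{\ast}$. By Assumption~\ref{Assumption1}, $q(x^{\ast},y)\geq \beta_{1}\|x^{\ast}-y\|>0$, and since $\Gamma$ is strictly increasing on $(0,+\infty)$ with $\Gamma[0]=0$, we have $\Gamma[q(x^{\ast},y)]>0$. Therefore
\begin{equation*}
f(x^{\ast})-f(y)\;\leq\;\lambda_{\infty}\Gamma[q(x^{\ast},y)]\;<\;\lambda_{\ast}\Gamma[q(x^{\ast},y)],
\end{equation*}
which is precisely the strong variational trap inequality (cf. Remark~\ref{remark:trap2}(c)) relative to $W_{\lambda_{\ast}}(x^{\ast})$. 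The main technical obstacle is step~(ii)'s limit argument, specifically the joint continuity of the quasi-distance $q$ (which is not automatic for a bare quasi-metric but is delivered here by the bi-Lipschitz bound of Assumption~\ref{Assumption1}) and the verification that the $\Gamma[q(x^{k},x^{k+1})]$ residual term vanishes; the rest of the argument is a bookkeeping exercise using the convergence result of \cite{BSPaper1} and the strict monotonicity of $\Gamma$.
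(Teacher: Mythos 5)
Your proposal is correct and follows essentially the same route as the paper: part (i) reduces Algorithm~\ref{algor2} to Algorithm~\ref{algor1} (by taking $y=x^{k}$ in \eqref{eq:prox1000}) and invokes Theorem~3.1 of \cite{BSPaper1}, and part (ii) passes to the limit in \eqref{eq:prox1000} to get the weak trap inequality with parameter $\lambda_{\infty}$, then upgrades to a strong trap for any $\lambda_{\ast}>\lambda_{\infty}$ exactly as in Remark~\ref{remark:trap2}. Your write-up is in fact slightly more careful than the paper's, since you justify the continuity of $q(\cdot,y)$ via Assumption~\ref{Assumption1} and explicitly note that the residual term $\lambda_{k}(1-\sigma)\Gamma[q(x^{k},x^{k+1})]$ vanishes in the limit.
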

\begin{proof}
The first part of the theorem follows immediately from \cite[Theorem 3.1]{BSPaper1} because any sequence, generated from Algorithm~\ref{algor2}, satisfies the conditions \eqref{eq:prox4} and \eqref{eq:prox6} of Algorithm~\ref{algor1}. Let $x^{\ast}$ be the limit point of the sequence $\{x^{k}\}$. Given that the sequence $\{\lambda_{k}\}\subset [\bar{\lambda},\tilde{\lambda}]$ (is bounded), $0<\bar{\lambda}\leq \tilde{\lambda}<+\infty$,  taking a subsequence, if necessary, we can assume that  $\lambda_{k}$ converges  to a certain $\lambda_{\infty}\in ]0,+\infty[$. For the second part, note that  $\{f(x^{k})\}$ is a non increasing sequence and $x^{\ast}\in\mbox{dom}$$f$. Now, given that $q(\cdot, y)$ is continuous for each $y\in X$ (see \cite{Moreno2011}), $\Gamma$ is continuous and $f$ is continuous on $\mbox{dom} f$, taking the limit in  \eqref{eq:prox1000}  as $k$ goes to infinity and assuming that $\lambda_{k}$ converges  to a certain $\lambda_{\infty}\in ]0,+\infty[$, we get:
\[
f(x^{\ast })\leq  f(y)+\lambda_{\ast }\Gamma[q(x^{\ast },y)], \qquad y\in X.
\]
Therefore, the desired result follows from Remark~\ref{remark:trap2}.
\end{proof}

%\begin{thm}
%Let $\{x^{k}\}$ be a bounded sequence generated by Algorithm~\ref{algor2}
%and assume that all the assumptions of Theorem~\ref{mainconvergence} hold.
%Then, the whole sequence $\{x^{k}\}$ converges to some critical point of $f$
%which is a strong global trap $x^{\ast }$, relative to the worthwile to
%change set $W_{\lambda _{\ast }}(x^{\ast })$, for any choice of the final
%satisfcing rate $\lambda _{\ast }>\lambda _{\infty }$.
%\end{thm}
%
%\begin{proof}
%The first part of the theorem is immediate because any sequence, generated from Algorithm~\ref{algor2}, satisfy the conditions \eqref{eq:prox4} and \eqref{eq:prox6} of Algorithm~\ref{algor1}. Let $x^{\ast}$ be the limit point of the sequence $\{x^{k}\}$. Since the sequence $\{\lambda_{k}\}\subset [\bar{\lambda},\tilde{\lambda}]$, $0<\bar{\lambda}\leq \tilde{\lambda}<+\infty$,  taking a subsequence, if necessary, we can assume $\lambda_{k}$ converges  to a certain $\lambda_{\infty}\in ]0,+\infty[$. Therefore, the second part of theorem  follows from Proposition~\ref{propo:29-05-2013}, which completes the proof.
%\end{proof}

\section{Application to the Formation and Break of Habits/Routines}

To save space, this section represents a really very short summary of the ``available to read? long preprint of our present paper, named ``Some comparisons between the Variational rationality, Habitual domain and DMCS approaches"; see  \cite{BentoAntoine2014-2}. The {\it Variational rationality (VR) approach} (see \cite{Soubeyran2009,Soubeyran2010})
focus attention on interdisciplinary stability and change dynamics (habits
and routines, creation and innovation, exploration and exploitation,\ldots),
and the self regulation problem, seen as a stop and go course pursuit
between feasible means and desirable ends mixing, in alternation, discrepancy
production (goal setting, goal revision), discrepancy reduction (goal
striving, goal pursuit) and goal disengagement. It rests on two main
concepts, worthwhile temporary stays and changes, variational traps and nine
principles. This (VR) approach allows to recover the main mathematical
variational principles, and in turn, it benefits from almost all variational
algorithms for procedural applications, which all, use some of the main
variational rationality principles.
%
%See Sections 1, 2, 3, 4 of \cite{BentoAntoine2014-2}. The (VR) approach (\cite{Soubeyran2009,Soubeyran2010})
%focus attention on interdisciplinary stability and change dynamics (habits
%and routines, creation and innovation, exploration and exploitation,\ldots),
%and the self regulation problem, seen as a stop and go course pursuit
%between feasible means and desirable ends mixing, in alternation, discrepancy
%production (goal setting, goal revision), discrepancy reduction (goal
%striving, goal pursuit) and goal disengagement. It rests on two main
%concepts, worthwhile temporary stays and changes, variational traps and nine
%principles. This (VR) approach allows to recover the main mathematical
%variational principles, and in turn, it benefits from almost all variational
%algorithms for procedural applications, which all, use some of the main
%variational rationality principles.
The {\it Habitual domain (HD) theory and (DMCS)
approach} (see Yu and Chen~\cite{YuChen2010}, for a nice presentation) and the (DMOCS) Decision
making and optimization problems in changeable spaces (see Larbani and Yu~\cite{LarbaniYu2012}) refer to three stability and change problems. They are i) stability
issues, using a system of differential equations, a variant of the famous
pattern formation Cohen-Grossberg model (see Cohen and Grossberg~\cite{Cohen1983}), ii)
expansion of an initial competency set to be able to solve a given problem,
which requires to acquire a new given competency set, using mathematical
programming methods and graphs, iii) DMCS optimization and game problems,
using Markov chains, with applications to innovation cover-discover problems
(see Yu and Larbani~\cite{YuLarbani2009} and Larbani and Yu~\cite{LarbaniYu2009,LarbaniYu2011}).

\subsection*{\textbf{1) Habit/routine formation and break problems}}

The (VR) approach sees habit/routine formation and break as a balance between motivation
and resistance to change, when agents use worthwhile changes; Permanent
habits refer to variational traps, as the end of a succession of worthwhile
changes. These findings fit well with Psychology and Management theories of
habits and routines formation and break. For example, in Psychology, habits
form by repetitions, in a rather stable context, which trigger their
repetition. They become gradually more and more automatized behaviors which
are intentional (goal directed), more or less conscious and controllable,
and economize cognitive resources. In this context agents are bounded
rational. The (HD) approach modelizes habit formation as a balance, each
step, between excitation and inhibition forces, which determine, each time,
the variation of the allocation of attention and effort (propensities),
using, as said before, a variant of the Cohen-Grossberg system of
differential equations (see \cite{Cohen1983}).

In the limit, the allocation of attentions and efforts (propensities)
converge to an allocation which represents a stable habitual domain. The
(DMOCS) approach of games defines limit profiles of mind sets as absorbing
states of a Markow chain. In these two contexts agents are bounded rational.

\subsection*{\textbf{2) Inexact proximal algorithms as repeated
satisficing problems with changeable decision sets}}

We have shown in Section 3 of this paper that
our inexact proximal algorithms refer to variable and changeable decision
sets, payoffs, goals (satisficing worthwhile changes) and preference
processes. This is the case because worthwhile to change sets are changeable decision sets which change, each period, with experience and the choice, each period, of the satisficing worthwhile to change ratio; see the whole Section 2 and the dynamic of worthwhile (hence satisficing) temporary stays and changes. More precisely Inexact generalized proximal algorithms are
specific instance of VR worthwhile stay and change adaptive dynamics. They
consider non transitive variable worthwhile to change preferences, see Section 2, which are
reference dependent preferences with variable reference points (variable
experience dependent utility/disutility functions, variable payoff
functions, variable and non linear resistance to change functions  via, in each case, of the introduction of the separable and variable lambda term). They can
use costs to be able to change which do not satisfy the triangular
inequality (\cite{BentoNetoAntoine2014} and several other references within \cite{BentoAntoine2014-2}). They are inexact and
procedural algorithms on quasi metric spaces. Then, they modelize bounded
rational and more or less myopic agents who bracket difficult decisions in
several steps, contrary to exact proximal algorithms which modelize a
repeated optimization problem, and which are not the topic of our paper.
They generalize the Simon satisficing principle to a dynamical context
(repeated and adaptive satisficing). They are anchored to optimization
processes as benchmark cases. They deal with changeable spaces as changeable worthwhile to change sets and also changeable exploration sets (see
\cite{AtAnt2011,BentoNetoAntoine2014}) and consider
convergence in finite time as a central topic (see \cite{BSPaper1,BCSS2013}). They include psychological aspects (like motivations, cognitions
and inertia) and can easily include emotional aspects.

We point out that the assumptions of inexact proximal
algorithms explicit in behavioral terms have been given in Section 2 of this paper (see the example) and after
each proximal algorithm.

\section{Conclusion}

In this paper, following \cite{BSPaper1}, and using
the recent variational approach presented in \cite{Soubeyran2009,Soubeyran2010}, we have proposed a
generalized ``epsilon inexact proximal" algorithm that converges to a
critical point which is also a variational trap. In Mathematics, our paper
helps to show how the literature on proximal algorithms can be divided in
two parts: the case of strong and weak relative resistance to change. In
this paper we have considered the most difficult situation, the weak case.
In Behavioral Sciences, our paper offers a dynamic model for
habituation/routinization processes, and gives a striking and new result on
the impact of the famous ``loss aversion" index (see the Nobel Prize
\cite{Tversky1979,Tversky1991}) on the speed of
convergence of such processes. Given editorial constraints (lack of space in
the present paper), this important result appears in the first paper (\cite{BSPaper1}). In \cite{BentoAntoine2014-2} the authors compare our VR variational rationality
approach of inexact proximal algorithms to the HD habitual domain theory and
DMCS approach (\cite{Yu1990,LarbaniYu2012}). Future research will consider
the multiobjective case.

\end{document}